\documentclass[
final
]{dmtcs-episciences}


\usepackage[utf8]{inputenc}
\usepackage{subfigure}
\usepackage{amsmath}

%

\usepackage[round]{natbib}

\newtheorem{theorem}{Theorem}

\author[Bansari. J. Rayjada et. al]{Bansari. J. Rayjada\affiliationmark{1}\thanks{Corresponding author.} 
  , \and Jekil. A. Gadhiya\affiliationmark{1},
  \and Mahadityasinh. A. Sarvaiya\affiliationmark{1}}
\title[Cordial Labeling of Goldberg Snark and its related Graphs]{Cordial Labeling of Goldberg Snark and its related Graphs}
\affiliation{
  Department of Mathematics, Marwadi University, Rajkot-360007, India.}
\keywords{Snark graph, Goldberg Snark graph, Cordial labeling, Path union of graphs, Open star of graphs, One point union for path graphs.}
\begin{document}
\publicationdata
{vol. 25:3 special issue for main purpose}
{2022}
{1}
{10.46298/dmtcs.10472}
{2022-12-3; None}
{2023-1-1}
\maketitle
\begin{abstract}
In graph theory, a Snark is a connected, bridgeless, Cubic graph that cannot be edge-colored with only three colors. Additionally, to avoid some trivial cases, a Snark is typically required to have a girth of minimum five and a cyclic connectivity of minimum four. In this paper, we investigate the Cordial labeling, for one of the modified structures of Snark graph which is known as Goldberg Snark graph. Moreover, a few special forms of Goldberg Snark graph also admit the Cordial labeling.
\end{abstract}

\section{Introduction}
The importance of Snarks is underscored by the relationship of the four colored theorem, one of whose equivalent formulations states that every Snark must be a non-planar graph. The existence of an infinite number of Snarks has been noted, and they remain a subject of recent research due to their complicated and usually elusive structural properties (\cite{4forbes2012snark}).

The study of Snarks traces its origins to the work of Peter G. Tait in 1880, who explored their connection to the four color theorem. However, the word ``Snark" was later taken forward by Martin Gardner in the year 1976 (\cite{11tait1880remarks}). Except their role in coloring problems, Snarks are closely connected to different challenging conjectures in Graph Theory. For example, the cycle double cover conjecture and the 5 -flow conjecture. Beyond extensive research, the nature of Snarks continues to pose important challenges, with their exact properties still not fully understood. The Snark conjecture, proposed by W. T. Tutte, further gives the importance by hypothesizing that every Snark contains a Petersen graph as a minor (\cite{1brinkmann2013generation}).

The Goldberg Snark graphs belong to the family of Snarks first identified by Goldberg in 1981. The Goldberg Snark is made by a series of systematic modifications and followed by the changes brought in the Petersen graph. Originally, the Petersen graph has 10 vertices and 15 edges which forms the base for this construction. The modification process of Petersen graph includes removal of two vertices and their connected edges and reducing the graph to 8 vertices. Even after the reduction and modification, the graph still pertains to its cubic nature and also maintains some of its important features from the original Petersen graph (\cite{7goldberg1981construction}).

\section{Preliminaries}
\subsection{Graph}
A graph $G=(V(G), E(G))$ has two sets, $V(G)=\left\{v_{1}, v_{2}, \ldots, v_{n}\right\}$ known as vertex set of $G$, and $E(G)=\left\{e_{1}, e_{2}, \ldots, e_{n}\right\}$ known as edge set of $G$. Elements of $V(G)$ and $E(G)$ are known as vertices and edges respectively (\cite{3deo2016graph}).

\subsection{Graph Labeling}
The graph labeling is an assignment of labels traditionally shown by integers, to the vertices or to the edges or both, subject to several conditions (\cite{6gallian2022dynamic}).

\subsection{Cordial Labeling}
Consider a graph $G=(V, E)$. Consider a vertex labeling function $f: V(G) \rightarrow\{0,1\}$ which induces an edge labeling function $f^{*}: E(G) \rightarrow\{0,1\}$ defined as $f^{*}(u v)= |f(u)-f(v)|$. Such a labeling function $f$ is known as Cordial labeling of graph G, if it satisfies the conditions $\left|v_{f}(0)-v_{f}(1)\right| \leq 1$ and $\left|e_{f}(0)-e_{f}(1)\right| \leq 1$, where $v_{f}(i)$ represents the number of vertices of G labeled using $f$ and $e_{f}(i)$ represents the number of edges of $G$ labeled using $f^{*} ; i \in\{0,1\}$ (\cite{2cahit1987cordial}).

\subsection{Edge coloring}
Edge coloring of a graph is an assignment of colors to the edges of the graph so that no two adjacent edges have the same color (\cite{3deo2016graph}).

\subsection{Duplication of vertex in a graph}
The duplication of a vertex v in a graph $G$ generates a graph $G^{\prime}$, which is secured by adding a vertex $v^{\prime}$ to the graph $G$, satisfying the property $N(v)=N\left(v^{\prime}\right)$ (\cite{9kaneria2014graceful}).

\subsection{Edge deletion in a graph}
Let $G=(V, E)$ be a graph and $F \subseteq E$. Then the graph obtained by excluding $F$ from $G$ denoted by $G / F$, is the subgraph of $G$ consisting the same vertices as that of $G$, but with all the elements of $F$ excluded. i.e. $G / F=(V, E / F)$ (\cite{3deo2016graph}).

\subsection{Path Union of a Graph}
Consider $k$ copies $G_{1}, G_{2}, G_{3}, \ldots, G_{k}$ of any graph $G$. Then the Path union of $G$ is obtained by adding an edge from $G_{i}$ to $G_{i+1}$, where $i \in\{1,2,3, \ldots, n-1\}$ (\cite{9kaneria2014graceful}).

\subsection{Open Star of a Graph}
A graph obtained by exchanging each vertex of $K_{1, t}$, except the apex vertex by the graphs $G_{1}, G_{2}, \ldots G_{n}$ is called as open star of graph which is shown by $S\left(G_{1}, G_{2}, \ldots, G_{n}\right)$. If we reconsider each vertex of $K_{1, t}$ excluding the apex vertex by graph $G$. i.e. $G_{1}=G_{2}=\cdots G_{n}$. Open star of graph is denoted by $S(t . G)$ (\cite{9kaneria2014graceful}).

\subsection{One point union of $\mathbf{t}$ copies of a path}
A graph $G$, obtained by exchanging each edge of $K_{1, t}$ by a path $P_{n}$ of length $n$ on $n+1$ vertices, is known as a one point union of $t$ copies of path $P_{n}$, which is shown as $P_{n}^{t}$ (\cite{5gadhiya2020some}).

\subsection{One point union of path graphs}
A graph $G$, obtained by replacing every vertex of $P_{n}^{t}$, except the central vertex, by the graphs  $G_{1}, G_{2}, \ldots G_{t n}$, is called one point union for paths of graphs and is shown by $P_{n}^{t}\left(G_{1}, G_{2}, \ldots G_{t n}\right)$. If all these graphs are identical, say $G$, then the notation gets changed to $P_{n}^{t}\left(t_{n}, G\right)$ (\cite{5gadhiya2020some}).

\subsection{Base Graph}
A Base graph is the main former graph from which all other graphs are originated or are formed by operations like edge contraction, deletion, extension, etc (\cite{3deo2016graph}).

\subsection{Cubic Graph}
A Cubic graph is a graph in which all the vertices have degree three. In other words, a Cubic graph is a 3-regular graph (\cite{3deo2016graph}).

\subsection{Petersen Graph}
The Petersen graph is an undirected graph that contains ten vertices and fifteen edges as shown below (\cite{3deo2016graph}).

\begin{figure}[htbp]
\centering
\begin{minipage}{.5\textwidth}
  \centering
  \includegraphics[scale=0.7]{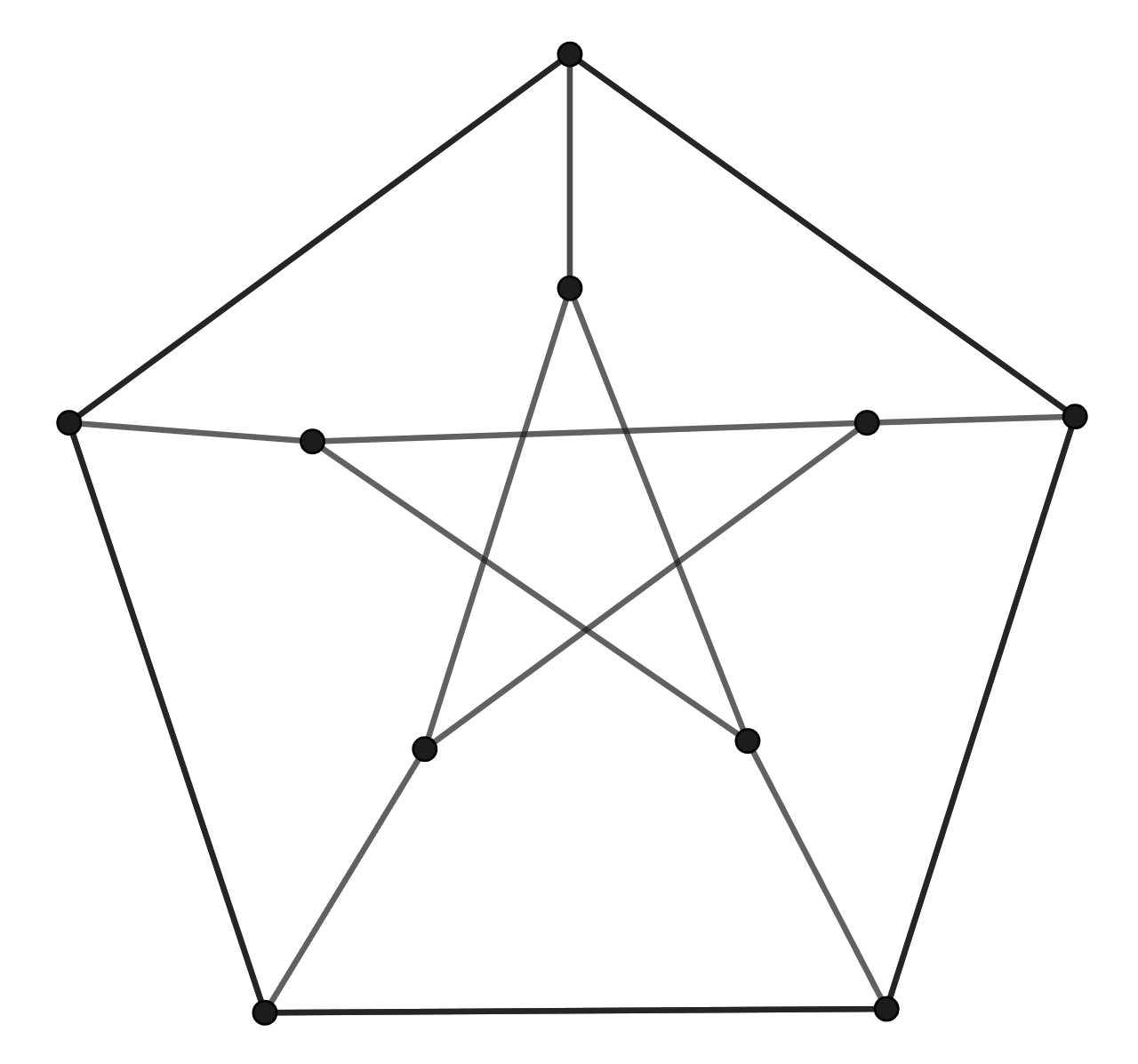}
  \caption{Petersen Graph\label{1}}
\end{minipage}%
\end{figure}

\subsection{Snark Graph}
A Snark is a connected, bridgeless, Cubic graph that cannot be edge-colored with only three colors. Additionally, to avoid some trivial cases, a Snark is typically required to have a girth of minimum five and a cyclic connectivity of minimum four (\cite{4forbes2012snark}).

\subsection{Goldberg Snark Graph}
A Goldberg Snark is a type of nontrivial, bridgeless, Cubic graph that is 4-chromatic and cannot be edge-colored with fewer than four colors, thus serving as a counterexample to the conjecture that every Cubic graph is 3-edge-colorable. To be More specific, the Goldberg Snark is a particular example within the broader class of Snarks, which are graphs that do not admit a 3-edge-coloring (\cite{7goldberg1981construction}).

\begin{figure}[htbp]
\centering
\begin{minipage}{.5\textwidth}
  \centering
  \includegraphics[scale=0.2]{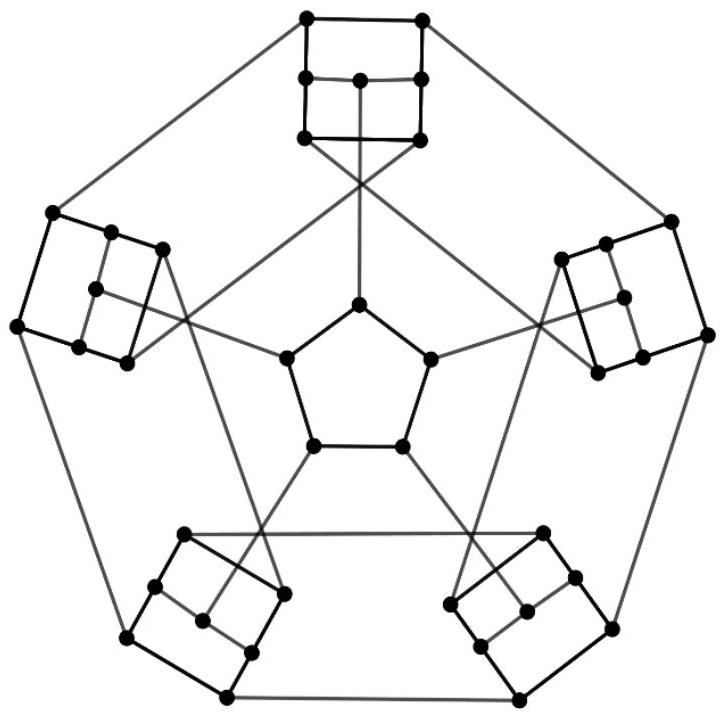}
  \caption{ Goldberg Snark $G_{5}$}\label{2}
\end{minipage}%
\begin{minipage}{.5\textwidth}
  \centering
  \includegraphics[scale=0.2]{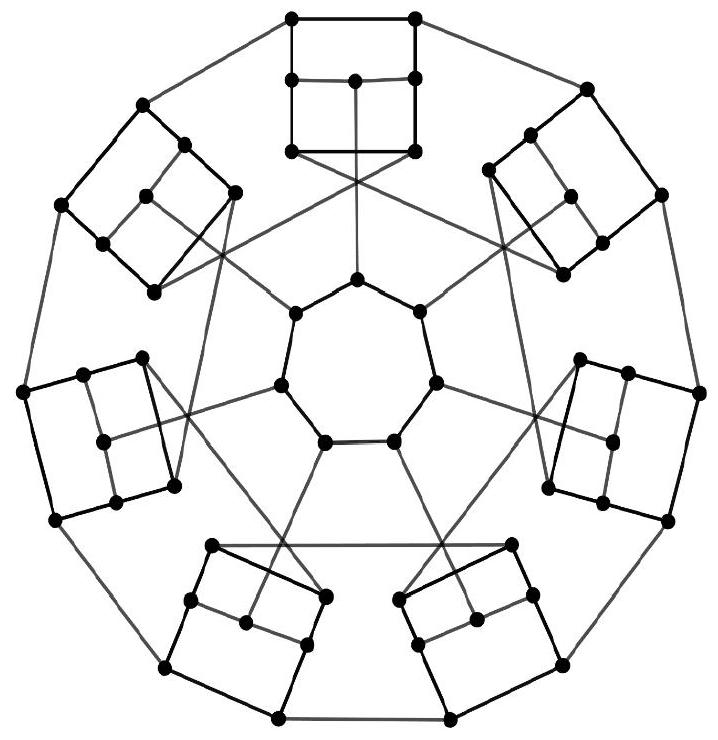}
  \caption{ Goldberg Snark $G_{7}$}\label{3}
\end{minipage}
\end{figure}

\section*{Augmentation of Goldberg Snark Graph}
\subsection*{Isomorphism of subgraphs $\mathbf{H}_\mathbf{i}$ within the Goldberg Snark graph $\mathbf{G_n}$.}

\begin{figure}[htbp]
\centering
\begin{minipage}{.5\textwidth}
  \centering
  \includegraphics[scale=0.25]{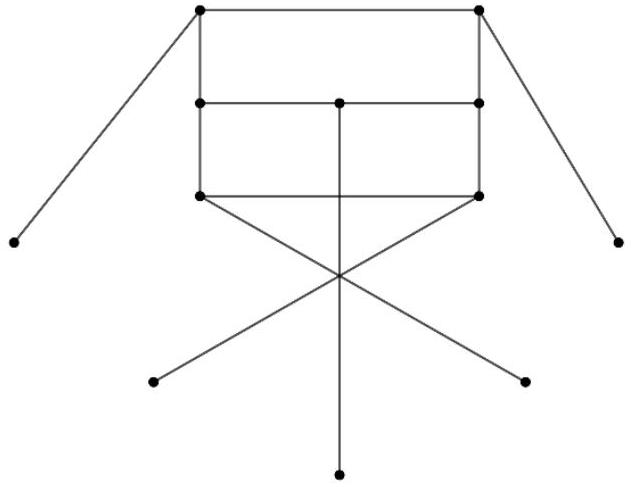}
\caption{ Subgraph $H_i$ of $G_{n}$}\label{4}
\end{minipage}%
\begin{minipage}{.5\textwidth}
  \centering
 \includegraphics[scale=0.25]{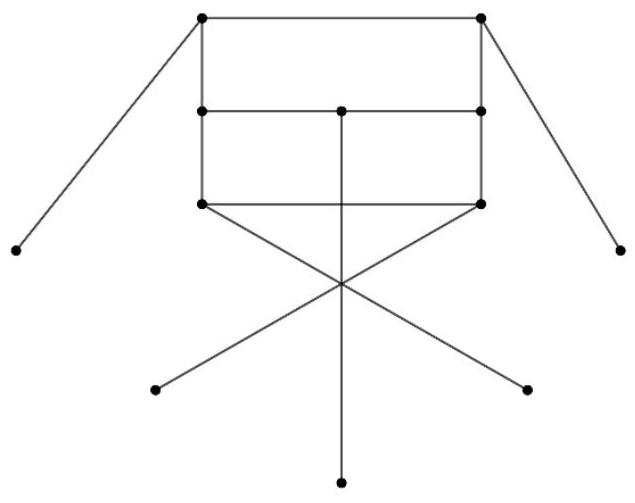}
\caption{ Subgraph $H_j$ of $G_{n}$}\label{5}
\end{minipage}
\end{figure}

\noindent
Observe that the subgraphs $H_{i}$ and $H_{j}$; where $1 \leq i, j \leq n(i \neq j)$ of $G_{n}$ as shown in the above figure, are isomorphic to each other. This property of $G_{n}$ will be put to use, to simplify the labeling pattern of the vertices for a few chosen graphs in some results going forward.

\vspace{6cm}

\section{Goldberg Snark Graph and its Construction}
\subsection{Construction of Snark Graph}
The Snark graph is a specific class of Cubic graphs. This explicit property makes the Snark graph, a structure of specific interest in both theoretical and applied contexts (\cite{4forbes2012snark}).

\subsection*{Characteristics}
\textbf{Non-trivial: }Snarks are non-trivial, meaning they carry more than a trivial number of vertices; specifically, they contain more than four vertices.
\\
\\
\textbf{Cardinality:} The cardinality of the vertex set $V$ and the edge set $E$ for Snark graph can be obtained by its property that they are Cubic graphs. Specific details are as follows.
\\
\\
\textbf{(i) Vertex Set ( $\mathbf{V}$ ):}\\
Even Number of Vertices: As Snarks are Cubic graphs, the total number of vertices ( $|V|$ ) is always even. This is due to a Cubic graph, the sum of the degrees of all the vertices is equal to $3 \times|v|$, which must be an even number (hence $|V|$ must be even).\\
\\
\textbf{(ii) Edge Set (E):}\\
Half the Degree Sum: The number of edges in a Cubic graph which can be calculated from the number of vertices. So, each vertex has degree three, and each edge connects two vertices, the number of edges $(|E|)$ is given by $\frac{3 \times|v|}{2}$ (\cite{1brinkmann2013generation}).

\subsection{Construction of Goldberg Snark Graph}
\subsubsection*{Base Structure}
The Base of the construction is the consideration of a basic Cubic graph with vertices $V=\{v_{1}, v_{2}, v_{3}, \ldots, v_{n}\}$ and edges $E=\{e_{1},$ $e_{2},$ $ e_{3},$ $\ldots,$ $e_{m}\}$ (\cite{10lukotka2024circular}).
\\
\subsubsection*{Vertex Duplication and the corresponding Edge Modification} 
For each vertex $v_{i}$ in the base graph, duplicate it to form two new vertices $v_{i}{ }^{\prime}$ and $v_{i}{ }^{\prime \prime}$. The original edges $e_{i}$ of the base graph are then reconnected such that:\\
Each edge $e_{i}=\left(v_{i}, v_{j}\right)$ in the base graph is replaced by a pair of edges ( $v_{i}{ }^{\prime}, v_{j}{ }^{\prime \prime}$ ) and ( $v_{i}{ }^{\prime \prime}, v_{j}{ }^{\prime}$ ) in the Goldberg Snark graph.\\[0pt]
This modification makes sure that each vertex in the new graph maintains the cubic property, having exactly three edges incident upon it (\cite{7goldberg1981construction}).
\\
\subsubsection*{Edge Subdivisions and Additional Vertices}
Subdivide each edge in the graph made in the above modification by introducing a new vertex. For example, if there is edge ( $v_{i}{ }^{\prime}, v_{j}{ }^{\prime \prime}$ ) in the modified graph, then replace it with the two edges $\left(v_{i}{ }^{\prime}, u_{i j}\right)$ and $\left(u_{i j}, v_{j}{ }^{\prime \prime}\right)$ where $u_{i j}$ the new vertex, added specifically for this edge. This step helps in verifying that every edge in the graph has a connected new vertex, which leads to an increase of graph's girth and supporting of the non-3-edge-colorability property (\cite{8jesintha2023new}).
\\
\subsubsection*{Ensuring Bridgeless Property and Iterative Construction process}
To ensure that the graph remains bridgeless, make sure that no single edge removal disconnects the graph. This can be checked repeatedly by confirming that each edge connects distinct cycles or the removal of any edge does not split the graph into disjoint components. The construction process is repeatedly applied, resulting in a group of subgraphs.
\\
Hence, the Goldberg Snark is made by an organized method which includes Vertex Duplication, Edge Modification, and Subdivision, beginning from a Modified Petersen graph. This process creates a unique class of Snark graphs containing $8 n$ vertices and $12 n$ edges, keeping the cubic structure and the essential non-3-edge-colorability which helps in understanding Snarks and its definitions on the basis of this process. The detailed construction makes sure that the result of the graph meets the strict standards of Snarks, which helps in better understanding of Cubic graphs and its chromatic properties (\cite{10lukotka2024circular}).

\section{Main Results}
\begin{theorem}
    The Goldberg Snark ${G}_{{n}}$ admits a Cordial labeling for ${n} \equiv {1}({\operatorname { m o d } 2})$ and ${n} \neq {3}$.
\end{theorem}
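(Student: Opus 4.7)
The plan is to exploit the fact, noted in the Augmentation section, that $G_n$ is composed of $n$ isomorphic subgraphs $H_1, H_2, \ldots, H_n$ (each on $8$ vertices), arranged cyclically and linked by a small number of cross-edges. Since $|V(G_n)|=8n$ and $|E(G_n)|=12n$, a cordial labeling must put exactly $4n$ vertices in each label-class and exactly $6n$ edges in each label-class. I would therefore design the labeling block-by-block so that each $H_i$ contributes the same fixed number of $0$-vertices and $1$-vertices (namely $4$ and $4$), and so that the within-$H_i$ edges contribute a fixed balanced count independent of $i$; the parity correction required by $n$ being odd is then pushed entirely onto the $n$ cross-edges joining consecutive blocks.

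The first concrete step is to fix coordinates: label the eight vertices of $H_i$ (as in Figure \ref{4}) by $a_i,b_i,c_i,d_i,e_i,f_i,g_i,h_i$, list the intra-block edges, and identify the three cross-edges that $H_i$ sends to $H_{i-1}$ and $H_{i+1}$. Next, I would define two complementary $4$/$4$ vertex-labeling patterns, call them $\alpha$ (say, label $0$ on $a_i,b_i,e_i,f_i$ and $1$ on the rest) and $\beta$ (the bitwise complement of $\alpha$); by the isomorphism of the $H_i$, each pattern produces the same profile of intra-block $0$-edges and $1$-edges. I would then alternate $\alpha$ and $\beta$ along the cycle $H_1,\ldots,H_n$; for odd $n\geq 5$ the naive alternation fails at the seam where $H_n$ meets $H_1$, so I would introduce a single "defect block" using a third pattern $\gamma$ (again a $4$/$4$ pattern but chosen to interface correctly with both an $\alpha$-block and a $\beta$-block). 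The precise choice of $\alpha,\beta,\gamma$ is what I would tune so that the resulting counts of $0$-edges and $1$-edges from cross-edges, added to the already-balanced intra-block contribution, land within the required tolerance $|e_f(0)-e_f(1)|\leq 1$.

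The verification step is purely bookkeeping: the vertex condition $|v_f(0)-v_f(1)|=0$ is automatic because every block is $4$/$4$; for the edge condition I would tabulate, for each of the patterns $\alpha,\beta,\gamma$, how many intra-block edges receive label $0$ and how many receive label $1$, and then separately tabulate the label of each cross-edge for every adjacency type $\alpha\!-\!\beta$, $\beta\!-\!\alpha$, $\alpha\!-\!\gamma$, $\gamma\!-\!\beta$. Summing over the $n-1$ alternating joins plus the one defect join (or two, depending on where $\gamma$ sits) gives a closed-form edge count in terms of $n$, and cordiality reduces to a single congruence in $n \bmod 2$.

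The main obstacle I anticipate is the choice of the defect block: for $n=3$ the cycle is so short that any single-block correction overwhelms the two remaining blocks and the edge imbalance cannot be kept within $1$, which is exactly why the theorem excludes $n=3$. So I would first settle a base case $n=5$ by exhibiting and checking an explicit labeling, and then argue inductively that inserting two additional blocks with patterns $\alpha,\beta$ into an existing cordial labeling of $G_n$ preserves both the vertex balance (adding $4$ and $4$) and the edge balance (adding $6$ and $6$, after replacing one cross-edge with three in the obvious way). This inductive step $G_n \rightsquigarrow G_{n+2}$ extends the base case to all odd $n\geq 5$, completing the proof.
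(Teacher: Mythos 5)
Your proposal is a search strategy rather than a proof: the entire mathematical content of a cordiality theorem lies in exhibiting the labeling and verifying the two counts, and you defer exactly that to a later ``tuning'' of the patterns $\alpha,\beta,\gamma$. You never specify which four vertices of $H_i$ receive $0$ under $\gamma$, never tabulate the intra-block and cross-edge contributions you promise to tabulate, and never show that a defect block with the required interface properties exists at all. The induction step is similarly unverified: consecutive blocks of $G_n$ are joined by \emph{three} cross-edges (of the form $v_{i,8}v_{i+1,8}$, $v_{i,6}v_{i+1,7}$, $v_{i,4}v_{i+1,2}$), so splicing two new blocks into the cycle is not a matter of ``replacing one cross-edge with three in the obvious way,'' and you would need to check that the nine cut-and-reattached cross-edges preserve the edge balance. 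Your diagnosis of the $n=3$ exclusion (a single defect block overwhelming two neighbours) is likewise asserted, not demonstrated.

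Beyond the gaps, the machinery is heavier than the problem requires, and this is where your plan diverges from the paper. The paper applies one and the same $4$/$4$ pattern to \emph{every} block $H_i$ (it gives two such patterns: $0$ on even $j$ and $1$ on odd $j$; or $0$ on $j\in\{4,6,7,8\}$ and $1$ on $j\in\{1,2,3,5\}$). With a uniform pattern the vertex count is $4n$ versus $4n$ automatically, every block contributes the same intra-block edge profile, and every triple of cross-edges between consecutive blocks contributes the same profile regardless of position, so the totals come out to $6n$ and $6n$ by direct counting --- there is no seam problem at $H_n$--$H_1$, hence no need for alternation, a defect block, or induction on $n$. The obstacle you built your argument around does not arise. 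If you wish to salvage your route, the minimal fix is to drop $\beta$ and $\gamma$ entirely, write down one explicit $4$/$4$ pattern, and carry out the edge tabulation for the nine intra-block edges and three cross-edges per block.
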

\begin{proof}
    Consider a Goldberg Snark $G_{n}$ with $8 n$ vertices and $12 n$ edges built using $n$-copies of the Goldberg Snark graph on eight vertices, where $n$ remains odd. Therefore, $\left|V\left(G_{n}\right)\right|=8 n$ and $\left|E\left(G_{n}\right)\right|= 12 n$.
\\
Consider the vertex set of $G_{n}$ as,\\
$V=\left\{v_{i, j} ; 1 \leq i \leq n\right.$ and $\left.1 \leq j \leq 8 ; n \in N\right\}$.
\\
To get the Cardinality of vertex in the graph, it can be observed that there is a repetitive structure of the graph shown in the figures \ref{6} and \ref{7}.\\
The number of vertices in a Goldberg Snark is eight and this structure is repeated $n$ times. Therefore, the total number of the vertices of the Graph $G_{n}$ is
\begin{equation}
   |V|=n+n+n+n+n+n+n+n=8 n
\end{equation}
Consider the edge set of $G_{n}$ as,
$$
\begin{aligned}
E=\left\{\left(v_{i, \mathrm{j}} v_{i,(\mathrm{j}+2)}\right.\right. & ; 1 \leq i \leq n, j=1,2,3,5) \cup\left(v_{i, \mathrm{j}} v_{i,(\mathrm{j}+5)} ; 1 \leq i \leq n, j=1,3\right) \\
& \cup\left(v_{i, \mathrm{j}} v_{\mathrm{i}, \mathrm{j}+1} ; 1 \leq i \leq n, j=1,4,6\right) \cup\left(v_{i, \mathrm{j}} v_{(i+1), \mathrm{j}} ; 1 \leq i \leq n-1, j=8\right) \\
& \cup\left(v_{n, j} v_{i, j} ; i=1, j=8\right) \cup\left(v_{i, \mathrm{j}} v_{(i+1),(\mathrm{j}+1)} ; 1 \leq i \leq n-1, \mathrm{j}=6\right) \\
& \cup\left(v_{n, \mathrm{j}} \mathrm{v}_{\mathrm{i}, \mathrm{j}+1)} ; \mathrm{i}=1, \mathrm{j}=6\right) \cup\left(v_{i, j} v_{(i+1),(j-2)} ; 1 \leq i \leq n-1, j=4\right) \\
& \left.\cup\left(v_{n, \mathrm{j}} v_{\mathrm{i}, \mathrm{j}-2)} ; i=1, j=4\right)\right\}
\end{aligned}
$$
\\
$|E|=\{(n+n+n+n+n+n+n+n+n)+(n-1)+(1)+(n-1)+(1)+(n- 1)+ (1)\}$ \\ $=9 n+n-1+1+n-1+1+n-1+1=9 n+n+n+n=12 n$
\\
\\
We have already noted that the subgraphs $H_{i}$ and $H_{j}$ of $G_{n}$ are isomorphic to each other. Due to this isomorphic property of all subgraphs $H_{i}$ of the Goldberg Snark graph $G_{n}$, the arrangement of the vertices remains unchanged for each subgraph $H_{i}, i \in\{1,2,3, \ldots, n\}$.
\\
We have defined two separate labeling functions.
\\
\textbf{Labeling pattern-1:}
The Vertex labeling function is defined as
\\
$$
f\left(v_{i, j}\right)=\left\{\begin{array}{l}
0 ; j \equiv 0(\bmod 2) \\
1 ; j \equiv 1(\bmod 2)
\end{array} ; \text { where } 1 \leq i \leq n, 1 \leq j \leq 8\right.
$$
\\
Now, to check the conditions for Cordial labeling for $f$, consider\\
$\left|v_{f}(0)\right|=\left|v_{f}(1)\right|= n+n+n+n=4 n 
\Rightarrow| | v_{f}(0)\left|-\left|v_{f}(1)\right|\right|=|4 n-4 n|=0 \leq 1$\\
$\left|e_{f}(0)\right|=\left|e_{f}(1)\right|=n+n+n+n-1+1+n-1+1+n-1+1=6 n \Rightarrow| | e_{f}(0)\left|-\left|e_{f}(1)\right|\right|=|6 n-6 n|=0 \leq 1$
\\
\\
\textbf{Labeling pattern-2:}
The Vertex labeling function is defined as\\
$$
f\left(v_{i, j}\right)=\left\{\begin{array}{l}0 ; j=4,6,7,8 \\ 1 ; j=1,2,3,5\end{array} ;\right. \text{ where }1 \leq i \leq n, 1 \leq j \leq 8$$
\\
For both the above definitions, all the subgraphs $H_{i}, 1 \leq i \leq n$ of $G_{n}$ assume the same vertex labeling patterns repeated $n$ number of times.
\\
Now, to check the conditions for Cordial labeling for $f$, consider\\
$\left|v_{f}(0)\right|=\left|v_{f}(1)\right|= n+n+n+n=4 n 
\Rightarrow| | v_{f}(0)\left|-\left|v_{f}(1)\right|\right|=|4 n-4 n|=0 \leq 1$\\
$\left|e_{f}(0)\right|=\left|e_{f}(1)\right|=n+n+n+n-1+1+n-1+1+n-1+1=6 n \Rightarrow| | e_{f}(0)\left|-\left|e_{f}(1)\right|\right|=|6 n-6 n|=0 \leq 1$
\\
Thus, the labeling function satisfies the conditions for Cordial labeling of $G$. Therefore, $G$ admits a Cordial Labeling. Hence, $G$ is a Cordial Graph.
\\
Consider the following two illustrations, one each for the two labeling functions patterns.

\clearpage
\subsubsection*{Illustration 1: A Cordial Labeling of Goldberg Snark $\mathbf{G}_{7}$.}
\begin{figure}[htbp]
\begin{center}
  \includegraphics[scale=0.3]{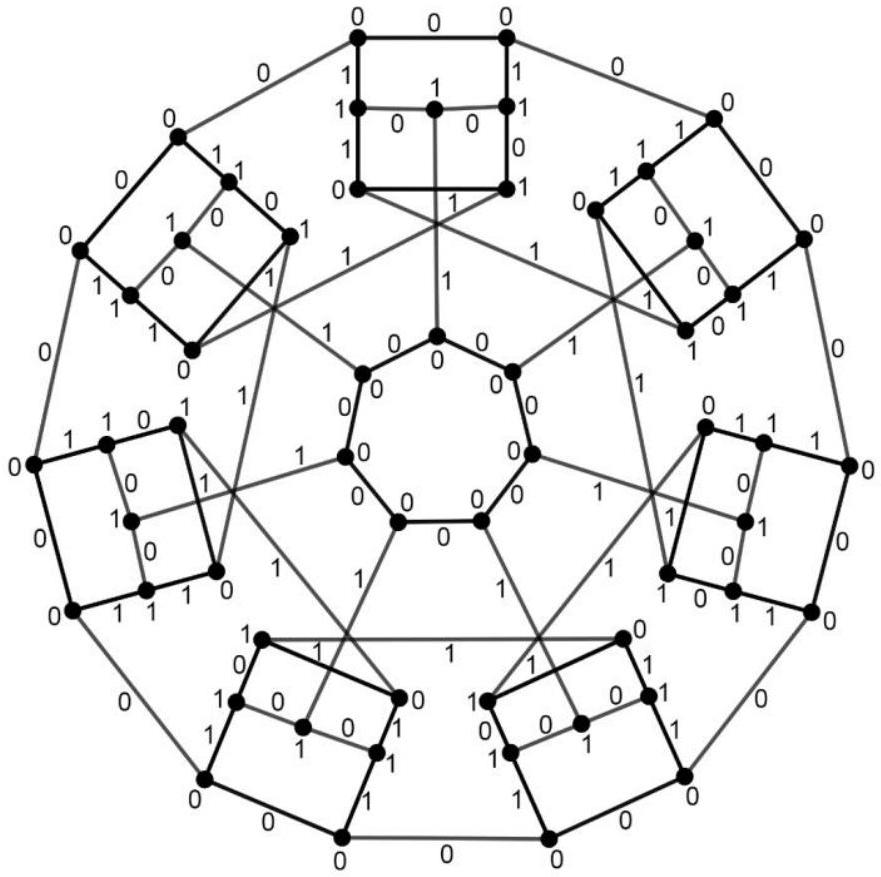}
\caption{ A Cordial Labeling of $G_7$}\label{6}
\end{center}
\end{figure}
\clearpage
\subsubsection*{Illustration 2: A Cordial Labeling of Goldberg Snark $\mathbf{G_5}$.}
\begin{figure}[htbp]
\begin{center}
  \includegraphics[scale=0.3]{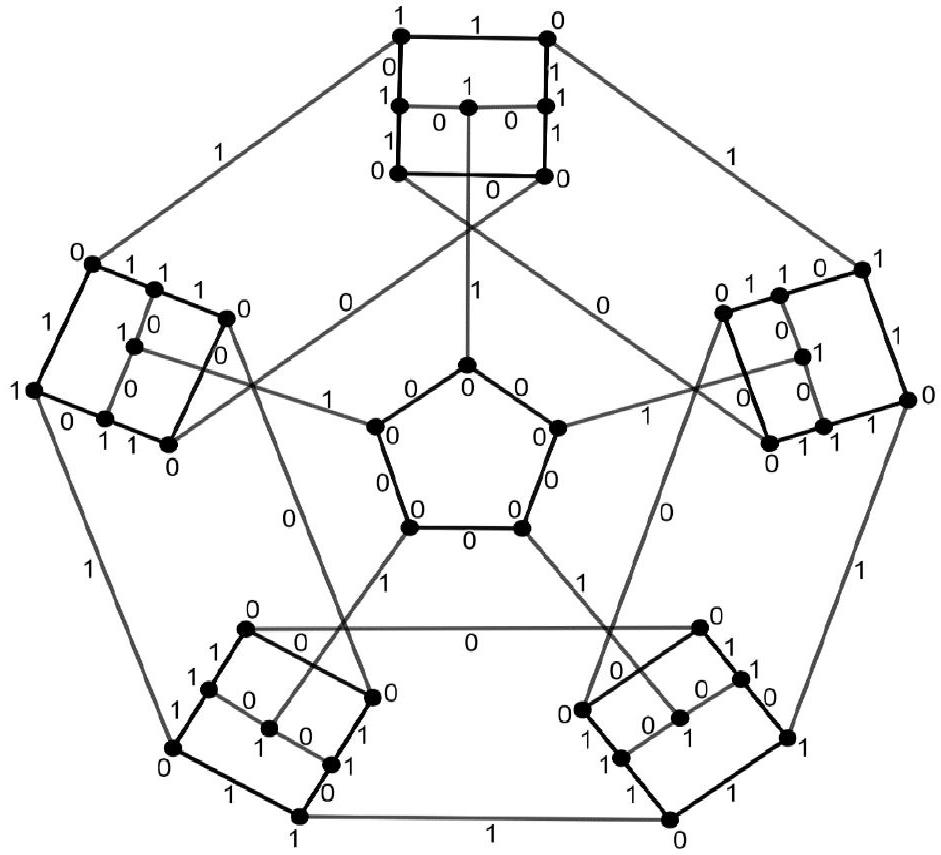}
\caption{ A Cordial Labeling of $G_5$}\label{7}
\end{center}
\end{figure}
\end{proof}

\begin{theorem}
   Path Union of ${m}$-copies of Goldberg Snark graph ${G}_{n}$ is a Cordial graph.   
\end{theorem}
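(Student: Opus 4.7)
The plan is to build the cordial labeling by re-using the labeling of Theorem~1 on every copy of $G_n$ and then handling the $m-1$ bridging edges separately. Let $P$ denote the path union with copies $G_n^{(1)}, G_n^{(2)}, \ldots, G_n^{(m)}$. First I would record the parameters: $|V(P)|=8mn$ and $|E(P)|=12mn+(m-1)$, so the cordial targets are $|v_f(0)|=|v_f(1)|=4mn$ and $\bigl||e_f(0)|-|e_f(1)|\bigr|\le 1$, with the total edge count $12mn+(m-1)$.

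Next I would fix the vertex set of $P$ as $\{v_{k,i,j}: 1\le k\le m,\ 1\le i\le n,\ 1\le j\le 8\}$, where $k$ indexes the copy. Apply the Labeling pattern-1 from Theorem~1 inside each copy: set $f(v_{k,i,j})=0$ if $j$ is even and $f(v_{k,i,j})=1$ if $j$ is odd. By Theorem~1 this contributes exactly $4n$ zeros, $4n$ ones, $6n$ edges of label $0$, and $6n$ edges of label $1$ per copy. Summed over the $m$ copies we already obtain $|v_f(0)|=|v_f(1)|=4mn$, so the vertex-cordiality condition is automatic regardless of how the bridges are drawn.

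Then I would specify the bridges. For each $k\in\{1,2,\ldots,m-1\}$ pick a pair $(x_k,y_k)$, where $x_k$ lies in $G_n^{(k)}$ and $y_k$ in $G_n^{(k+1)}$, from an agreed admissible set of vertices (for concreteness, fix the attachment to happen on a specific $H_i$-block of each copy, choosing between a $j$-even and a $j$-odd vertex as needed). Because every copy contains plenty of vertices of both labels, this selection is completely free: picking both endpoints of the same parity gives a bridge of label $0$ and picking opposite parities gives a bridge of label $1$. To balance the $m-1$ new edges against the already-balanced $12mn$ interior edges, I would assign $\lceil (m-1)/2\rceil$ of the bridges to one label and $\lfloor (m-1)/2\rfloor$ to the other; the resulting edge discrepancy is $0$ when $m$ is odd and $1$ when $m$ is even, so the edge-cordiality condition holds.

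The only real obstacle is bookkeeping: one must make sure that the chosen attachment vertices $x_k,y_k$ exist with the required labels in the positions where the path-union definition places the bridge, and that the addition of the bridge does not force a second incident label constraint we cannot satisfy. This is mild because each copy has $4n$ vertices of each label distributed across all isomorphic $H_i$-blocks, so we can always find endpoints of the desired labels, and I would conclude by exhibiting explicit choices (for instance pairing a $v_{k,1,8}$ with $v_{k+1,1,7}$ or $v_{k+1,1,8}$, toggling the $j$-parity of the second endpoint with $k$) together with a small illustration for $m=2$ or $m=3$, mirroring the two illustrations used in Theorem~1.
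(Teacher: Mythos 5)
Your proposal is correct and shares the paper's skeleton -- label each copy of $G_n$ with a labeling from Theorem~1 so that the $8mn$ vertices and the $12mn$ interior edges are exactly balanced, and then observe that only the $m-1$ bridge edges can disturb cordiality -- but you resolve the bridges differently. The paper fixes the attachment vertices and instead switches between the two labeling patterns of Theorem~1 according to residues modulo $4$ (the intent being that consecutive copies do not all carry identical patterns), then splits into the cases $m$ even and $m$ odd to conclude that the bridge edges contribute $\lceil (m-1)/2\rceil$ and $\lfloor (m-1)/2\rfloor$ to the two edge classes. You keep a single pattern on every copy and instead exploit the freedom in the definition of path union (``adding an edge from $G_i$ to $G_{i+1}$'' with unspecified endpoints) to choose each bridge's endpoints with equal or opposite labels, forcing the same $\lceil (m-1)/2\rceil$ versus $\lfloor (m-1)/2\rfloor$ split directly. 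Your route is more transparent and avoids the paper's somewhat opaque mod-$4$ bookkeeping; its only cost is that it proves cordiality for a particular choice of connecting edges rather than for a fixed canonical one, but the paper's own proof implicitly makes the same kind of choice, so this is not a gap relative to the paper. You should make the exhibited attachment pairs fully explicit (as you indicate at the end) so that the parity toggle on the second endpoint is unambiguous.
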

\begin{proof}
Let $G$ be a graph obtained by joining $m$-copies of the Goldberg Snark $G_{n}$.\\
Consider the vertex set of G as,
\\
$V=\left\{v_{i, j, k} ; 1 \leq i \leq n, 1 \leq j \leq 8,1 \leq k \leq m\right\}$\\
where $k$ represents the number of paths of graph $G$ and $m$ represents the number of $m$-copies of the Goldberg Snark $G_{n}$ in $G$. Thus, \\ 
$|V(G)|=m\left|V\left(G_{n}\right)\right|$ and \\
$|E(G)|=m\left|E\left(G_{n}\right)\right|+m-1$
\\
To obtain Cordial labeling of Path Union of $m$-copies of the Goldberg Snark $G_{n}$ defined a vertex labeling function $f: V(G) \rightarrow\{0,1\}$ as below.
\\
\\$f\left[v_{(i, j, k)}\right]=\left\{\begin{array}{c}0 ; j \equiv 0(\bmod 2), m \equiv 1,2(\bmod 4) \\ 1 ; j \equiv 1(\bmod 2), m \equiv 1,2(\bmod 4)\end{array} ;\right.$ where $1 \leq i \leq n, 1 \leq j \leq 8$\\
$f\left[v_{(i, j, k)}\right]=\left\{\begin{array}{l}0 ; j=4,6,7,8, m \equiv 3,4(\bmod 4) \\ 1 ; j=1,2,3,5, m \equiv 3,4(\bmod 4)\end{array} ;\right.$ where $1 \leq i \leq n, 1 \leq j \leq 8$\\
By considering the above labeling pattern, we will now check for the conditions of Cordial labeling for $f$.\\
$\left|v_{f}(0)\right|=\left|v_{f}(1)\right|=\left|\frac{m\left|V\left(G_{n}\right)\right|}{2}\right|$\\
$\left|v_{f}(0)-v_{f}(1)\right| \Rightarrow\left|\left|\frac{m\left|V\left(G_{n}\right)\right|}{2}\right|-\left|\frac{m\left|V\left(G_{n}\right)\right|}{2}\right|\right|=0 \leq 1$
\\
\\In order to check the edge labeling condition we consider the following two cases.\\
\\
\textbf{Case-I: $\boldsymbol{m \equiv 0}(\boldsymbol{\operatorname { m o d }}\, 2)$.}\\
$\left|e_{f}(0)\right|=\left|\frac{m\left|E\left(G_{n}\right)\right|}{2}+\left\lceil\frac{k}{2}\right\rceil\right|$ or $\left|e_{f}(0)\right|= \left| \left|e_{f}(1)\right| + 1\right|$\\
$\left|e_{f}(1)\right|=\left|\frac{m\left|E\left(G_{n}\right)\right|}{2}+\left\lfloor\frac{k}{2}\right\rfloor\right|$\\
$\left|e_{f}(0)-e_{f}(1)\right| \Rightarrow\left|\left|\frac{m\left|E\left(G_{n}\right)\right|}{2}+\left\lceil\frac{k}{2}\right\rceil\right|-\left|\frac{m\left|E\left(G_{n}\right)\right|}{2}+\left\lfloor\frac{k}{2}\right\rfloor\right|\right|=1 \leq 1$
\\
\\
In the view of the above labeling pattern, $f$ satisfies the conditions of Cordial labeling for $G$. 
\\
Thus, the graph $G$ is a Cordial graph for even $m$.
\\
\\
\textbf{Case-II: $\boldsymbol{m  \equiv 1}(\boldsymbol{\operatorname { m o d }\, 2)}$.}\\
By considering the above labeling pattern, we will now check for the condition of Cordial labeling for $f$.
\\
$
\left|e_{f}(0)\right|=\left|e_{f}(1)\right|=\left|\frac{m\left|E\left(G_{n}\right)\right|}{2}+\frac{k}{2}\right|
$
\\
$\left|e_{f}(0)-e_{f}(1)\right| \Rightarrow\left|\left|\frac{m\left|E\left(G_{n}\right)\right|}{2}+\frac{k}{2}\right|-\left|\frac{m\left|E\left(G_{n}\right)\right|}{2}+\frac{k}{2}\right|\right|=0 \leq 1$\\
In the view of above labeling pattern, $f$ satisfies the conditions of Cordial labeling for $G$. Therefore, the graph $G$ is a Cordial graph for odd $m$.\\
Hence, the Path Union of $m$-copies of Goldberg Snark graph $G_{n}$ is Cordial graph.

\subsubsection*{Illustration 3: A Cordial Labeling of Path union of four copies of Goldberg Snark $\mathbf{G}_{5}$.}
\begin{figure}[htbp]
\begin{center}
  \includegraphics[width=\textwidth]{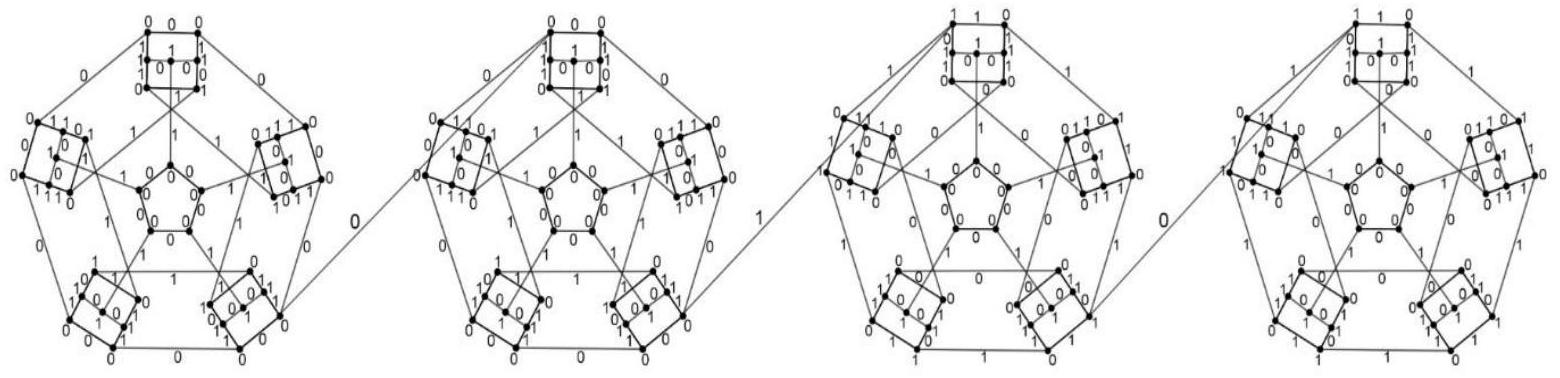}
\caption{ A Cordial Labeling of Path Union of four copies of $G_5$}\label{8}
\end{center}
\end{figure}
\noindent

\clearpage
\subsubsection*{Illustration 4: A Cordial Labeling of Path union of five copies of Goldberg Snark $\boldsymbol{G}_{\mathbf{7}}$.}
\begin{figure}[htbp]
\begin{center}
  \includegraphics[width=\textwidth]{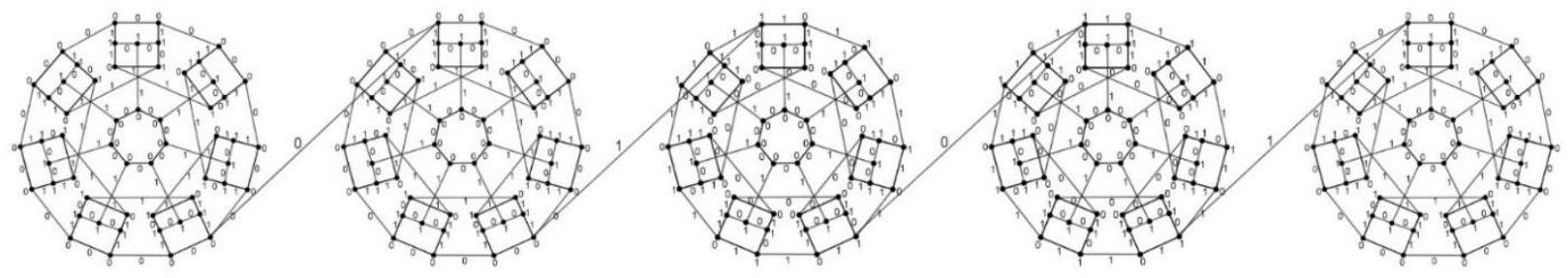}
\caption{ A Cordial Labeling of Path Union of five copies of $G_7$}\label{9}
\end{center}
\end{figure}
\end{proof}
\begin{theorem}
    An Open Star of Graphs ${S}\left({t}, {G}_{{n}}\right)$ is a Cordial Graph.
\end{theorem}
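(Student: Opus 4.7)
The plan is to reduce this to Theorem~1 by relabelling each copy of $G_n$ with one of the two cordial patterns established there, and then choosing how the apex vertex of the underlying $K_{1,t}$ is joined to each copy so that the $t$ bridging edges absorb any residual imbalance. Concretely, write $S(t,G_n)$ as an apex vertex $v_0$ together with $t$ disjoint copies $G_n^{(1)},\dots,G_n^{(t)}$ of the Goldberg Snark, where for each $k$ one distinguished vertex $w_k$ of $G_n^{(k)}$ is joined to $v_0$ by an edge. Then $|V(S(t,G_n))|=8nt+1$ and $|E(S(t,G_n))|=12nt+t$.

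\noindent
First I would fix the vertex labels on each $G_n^{(k)}$ by copying Labeling Pattern~1 from Theorem~1; by the isomorphism of the subgraphs $H_i$ inside $G_n$ this gives $4n$ vertices of each label and $6n$ edges of each label inside every single copy. Summed over the $t$ copies this already yields $4nt$ vertices of each label and $6nt$ edges of each label, regardless of the parity of $t$. Next I would set $f(v_0)=0$, so that the vertex totals become $4nt+1$ zeros and $4nt$ ones, satisfying $|v_f(0)-v_f(1)|=1\le 1$.

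\noindent
The remaining task is to control the $t$ edges $v_0w_k$, whose induced labels equal $f(w_k)$ since $f(v_0)=0$. Because Labeling Pattern~1 assigns label $0$ to exactly the vertices $v_{i,j}$ with $j$ even and label $1$ to those with $j$ odd, and both types are plentiful in each $G_n^{(k)}$, I can pick the distinguished vertices $w_k$ freely: choose $\lceil t/2\rceil$ of them to have label~$1$ and $\lfloor t/2\rfloor$ to have label~$0$. This contributes $\lfloor t/2\rfloor$ to $e_f(0)$ and $\lceil t/2\rceil$ to $e_f(1)$, giving $e_f(0)=6nt+\lfloor t/2\rfloor$ and $e_f(1)=6nt+\lceil t/2\rceil$, so $|e_f(0)-e_f(1)|\le 1$. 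I would organise the verification into the two cases $t$ even and $t$ odd, checking both inequalities explicitly, and close by displaying the construction on a small example, mirroring the illustrations used in the earlier theorems.

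\noindent
\textbf{Main obstacle.} The routine counting is straightforward once the labels are fixed; the delicate point is the selection of the attachment vertices $w_k$, because we must simultaneously (i) respect the cubic structure of each $G_n^{(k)}$ so that the open-star construction is well defined, (ii) obtain the prescribed split of $\lceil t/2\rceil$ ones and $\lfloor t/2\rfloor$ zeros among the $f(w_k)$, and (iii) do so consistently with the labeling pattern imported from Theorem~1. If a single labeling pattern does not provide enough flexibility for some residue of $t$ modulo~$4$, the fallback is to mix Patterns~1 and~2 from Theorem~1 across different copies (both give the same balanced vertex and edge counts inside a copy), which gives full freedom in choosing the label of each $w_k$ and thereby guarantees the edge-balance condition in every case.
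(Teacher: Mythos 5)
Your proposal is correct and follows essentially the same route as the paper: decompose $S(t,G_n)$ into the apex $v_0$ plus $t$ copies of $G_n$, label each copy with a cordial pattern inherited from Theorem~1 so that every copy contributes $4n$ vertices and $6n$ edges of each label, put $f(v_0)=0$ to get $v_f(0)=4nt+1$ and $v_f(1)=4nt$, and then balance the $t$ bridging edges. The one genuine difference is the mechanism for that last step: the paper applies Labeling Pattern~1 to the first $\lceil t/2\rceil$ (resp.\ $\lfloor t/2\rfloor$) copies and Labeling Pattern~2 to the rest, so that the apex edges split as evenly as possible between labels, whereas you keep Pattern~1 on every copy and instead vary which vertex $w_k$ is joined to the apex. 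Your version is arguably cleaner because it needs only one pattern and makes the bookkeeping of $e_f(0)=6nt+\lfloor t/2\rfloor$, $e_f(1)=6nt+\lceil t/2\rceil$ transparent; the paper's version works even if the attachment vertex is required to sit at the same position $(i,j)$ in every copy, provided that position is one where the two patterns disagree (namely $j\in\{2,7\}$). Both arguments share the same unstated assumption that the open-star construction leaves enough freedom in choosing (or positioning) the attachment vertices, and your explicitly named fallback --- mixing Patterns~1 and~2 across copies --- is precisely what the paper does, so nothing essential is missing.
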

\begin{proof}
  Let $G$ be a graph obtained by replacing each vertex of $k_{1, t}$ except the apex vertex by Goldberg Snark graph $G_{n}$. Let $v_{0}$ be the apex vertex of $k_{1, t}$, i.e. $v_{0}$ is the central vertex of Graph $G$.
\\
Let $V=\left\{v_{i, j, k} ; 1 \leq i \leq n, 1 \leq j \leq 8,1 \leq k \leq t\right\}$ be the vertex set of the Goldberg Snark $G_{n}$ in $G$; where $k$ represents the number of branches of $k_{1, t}$ in $G$.
\\
\\
Assign the label 0 to the central vertex of Graph $G$. Define a vertex labeling function $f: V(G) \rightarrow \{0,1\}$ as below.\\
Consider the following cases:
\\
\\
\textbf{Case-I: $\boldsymbol{t \equiv 0}(\boldsymbol{\operatorname { m o d }} \, 2)$.}\\
$f\left[v_{(i, j, k)}\right]=\left\{\begin{array}{l}0 ; j \equiv 0(\bmod 2), 1 \leq k \leq\left[\frac{t}{2}\right] \\ 1 ; j \equiv 1(\bmod 2), 1 \leq k \leq\left[\frac{t}{2}\right]\end{array} ;\right.$ where $1 \leq i \leq n, 1 \leq j \leq 8$\\
\\
$f\left[v_{(i, j, k)}\right]=\left\{\begin{array}{l}0 ; j=4,6,7,8,\left[\frac{t}{2}\right]<k \leq t \\ 1 ; j=1,2,3,5,\left[\frac{t}{2}\right]<k \leq t\end{array} \quad ;\right.$ where $1 \leq i \leq n, 1 \leq j \leq 8$
\\
\\In the view of above labeling pattern, we now check for the conditions of Cordial Labeling for $f$.\\
$\left|v_{f}(0)\right|=\left|\frac{t\left|V\left(G_{n}\right)\right|}{2}+1\right|$ or $\left|v_{f}(0)\right|=\left|\left|v_{f}(1)\right|+1\right|$\\
$\left|v_{f}(1)\right|=\left|\frac{t\left|V\left(G_{n}\right)\right|}{2}\right|$\\
$\left|v_{f}(0)-v_{f}(1)\right| \Rightarrow\left|\left|\frac{t\left|V\left(G_{n}\right)\right|}{2}+1\right|-\left|\frac{t\left|V\left(G_{n}\right)\right|}{2}\right|\right|=1 \leq 1$\\
$\left|e_{f}(0)\right|=\left|e_{f}(1)\right|=\left|\frac{\left|t E\left(G_{n}\right)\right|}{2}+\frac{k}{2}\right|$\\
$\left|e_{f}(0)-e_{f}(1)\right| \Rightarrow\left|\left|\frac{\left|t E\left(G_{n}\right)\right|}{2}+\frac{k}{2}\right|-\left|\frac{t\left|E\left(G_{n}\right)\right|}{2}+\frac{k}{2}\right|\right|=0 \leq 1$
\\
\\In view of the above labeling pattern, $f$ satisfies the conditions of Cordial labeling for $G$. Thus, the graph $G$ is a Cordial graph for even $t$.  
\\
\\
\textbf{Case-II: $\boldsymbol{t \equiv} \boldsymbol{1}(\boldsymbol{\operatorname { m o d }} 2)$.}\\
$f\left[v_{(i, j, k)}\right]=\left\{\begin{array}{l}0 ; j \equiv 0(\bmod 2), 1 \leq k \leq\left\lceil\frac{t}{2}\right\rceil \\ 1 ; j \equiv 1(\bmod 2), 1 \leq k \leq\left\lceil\frac{t}{2}\right\rceil\end{array} \quad ;\right.$ where $1 \leq i \leq n, 1 \leq j \leq 8$\\
$f\left[v_{(i, j, k)}\right]=\left\{\begin{array}{l}0 ; j=4,6,7,8,\left\lceil\frac{t}{2}\right\rceil<k \leq t \\ 1 ; j=1,2,3,5,\left\lceil\frac{t}{2}\right\rceil<k \leq t\end{array} \quad ;\right.$ where $1 \leq i \leq n, 1 \leq j \leq 8$\\
\\
\\In the view of above labeling pattern, we now check for the conditions of Cordial labeling for $f$.\\
$\left|v_{f}(0)\right|=\left|\frac{t\left|V\left(G_{n}\right)\right|}{2}+1\right|$ or $\left|v_{f}(0)\right|=\left|\left|v_{f}(1)\right|+1\right|$\\
$\left|v_{f}(1)\right|=\left|\frac{t\left|V\left(G_{n}\right)\right|}{2}\right|$\\
$\left|v_{f}(0)-v_{f}(1)\right| \Rightarrow\left|\left|\frac{t\left|V\left(G_{n}\right)\right|}{2}+1\right|-\left|\frac{t\left|V\left(G_{n}\right)\right|}{2}\right|\right|=1 \leq 1$\\
$\left|e_{f}(0)\right|=\left|\frac{\left|t\left(G_{n}\right)\right|}{2}+\left\lceil \frac{k}{2} \right\rceil \right|$ or $\left|e_{f}(0)\right|=\left|\left|e_{f}(1)\right|+1\right|$\\
$\left|e_{f}(1)\right|=\left|\frac{t\left|E\left(G_{n}\right)\right|}{2}+\left\lfloor\frac{k}{2}\right\rfloor\right|$\\
$\left|e_{f}(0)-e_{f}(1)\right| \Rightarrow \left| \left| \frac{t\left|E\left(G_{n}\right)\right|}{2} + \left\lceil \frac{k}{2} \right\rceil \right| - \left| \frac{t\left|E\left(G_{n}\right)\right|}{2} + \left\lfloor \frac{k}{2} \right\rfloor \right| \right| = 1 \leq 1$
\\
\\In the view of above labeling pattern, $f$ satisfies the conditions of Cordial labeling for $G$. Thus, the graph $G$ is a Cordial graph for odd $t$. Therefore, the Open Star of graph $S\left(t, G_{n}\right)$ is a Cordial graph.

\subsubsection*{Illustration 5: A Cordial Labeling of $\mathbf{S(4,G_7)}$.}
\begin{figure}[htbp]
\begin{center}
  \includegraphics[width=\textwidth]{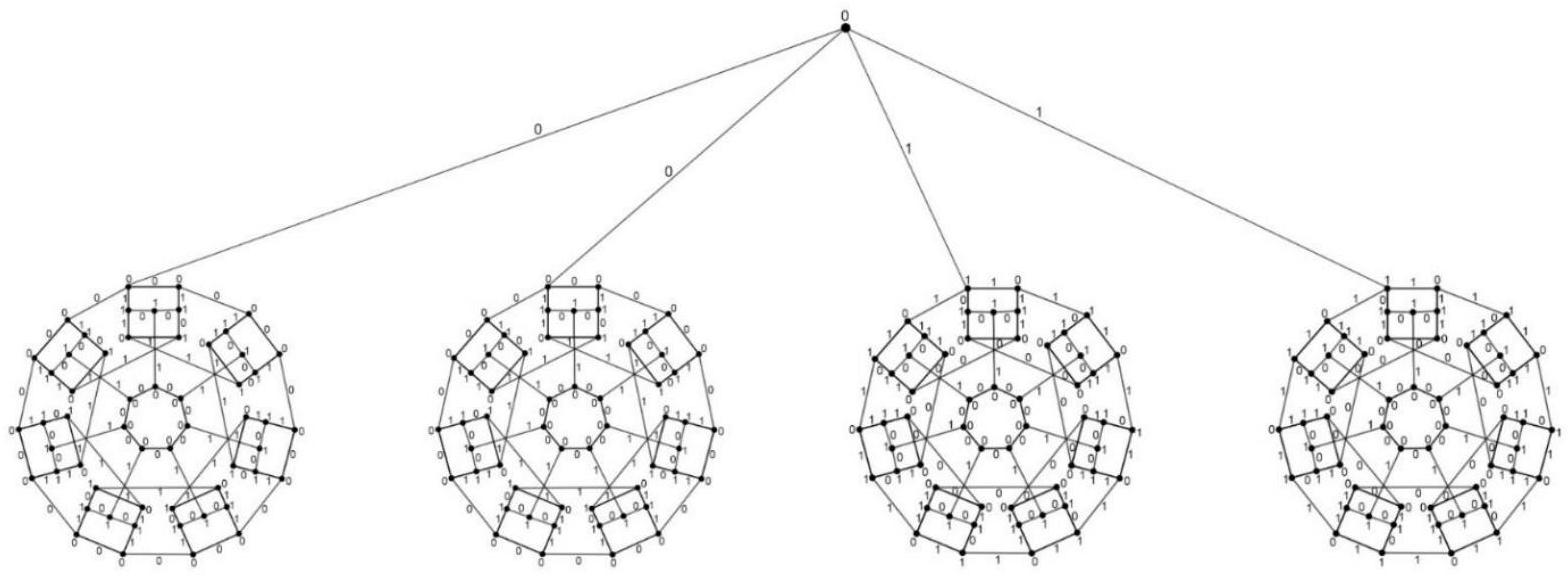}
\caption{ A Cordial Labeling of $\mathbf{S(4,G_7)}$} \label{10}
\end{center}
\end{figure}
\noindent
\clearpage
\subsubsection*{Illustration 6: A Cordial Labeling of $S(3,G_5)$.}
\begin{figure}[htbp]
\begin{center}
  \includegraphics[width=\textwidth]{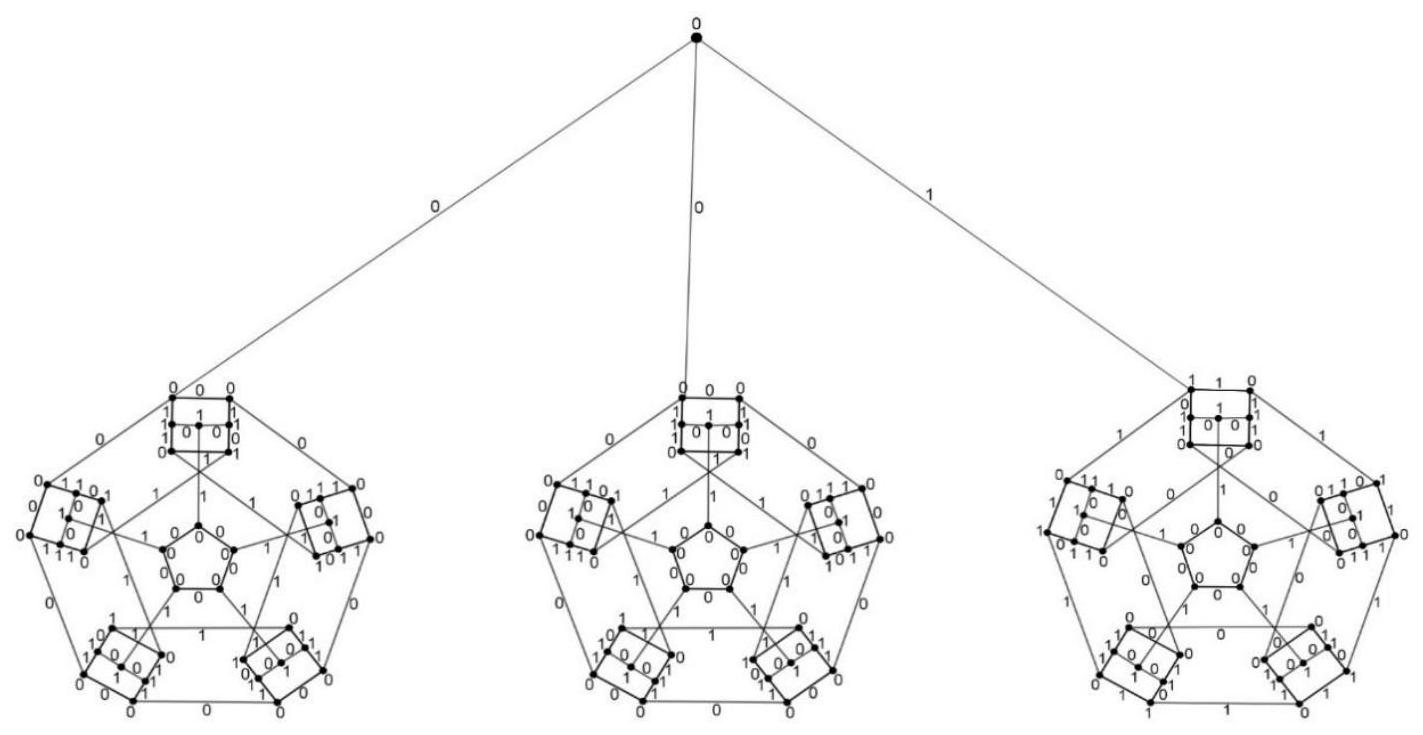}
\caption{ A Cordial Labeling of $S(3,G_5)$} \label{11}
\end{center}
\end{figure}
\end{proof}
\begin{theorem}
    A one point union for a path of a Goldberg Snark $P_{n}^{t}\left(t_{n}, G_{n}\right)$ is a Cordial graph.
\end{theorem}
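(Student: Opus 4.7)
The plan is to mirror the structure of Theorems~1--3, exploiting the fact that each copy of $G_n$ contributes $4n$ vertices of each label and $6n$ edges of each label under either of the two labeling patterns given in Theorem~1. I would denote the central vertex of $P_n^t$ by $v_0$ and index the $tn$ non-central vertices---each now replaced by a copy of $G_n$---by pairs $(r,s)$ with $r \in \{1,\ldots,t\}$ identifying the branch of $P_n^t$ and $s \in \{1,\ldots,n\}$ the depth along that branch, where $s=1$ is the copy adjacent to $v_0$. A direct count then gives $|V| = 1 + 8tn^2$ and $|E| = 12tn^2 + tn$, so the cordiality targets are $\{v_f(0),v_f(1)\} = \{4tn^2+1,\,4tn^2\}$ and $|e_f(0)-e_f(1)| \le 1$.

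I would first assign $f(v_0)=0$ and label each copy $G_n^{(r,s)}$ using either Labeling pattern-1 or Labeling pattern-2 from Theorem~1, according to a scheme indexed by $(r,s)$ and the parities of $t$ and $n$. Because each copy contributes exactly $4n$ zeros and $4n$ ones irrespective of the pattern chosen, the vertex condition is automatic: $v_f(0)=4tn^2+1$ and $v_f(1)=4tn^2$, so $|v_f(0)-v_f(1)|=1$. Similarly, every copy contributes $6n$ edges of each label internally, so the entire edge-balance question reduces to managing the $tn$ connecting edges---namely the $t$ edges incident with $v_0$ together with the $t(n-1)$ inter-copy edges along the branches.

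The key mechanism is that, at any fixed attachment index $j$, the two patterns can assign opposite labels to the vertex $v_{i,j}$ of a copy: for example at $j=7$, pattern-1 gives label $1$ while pattern-2 gives label $0$. Hence by choosing pattern-1 or pattern-2 for each $G_n^{(r,s)}$ I can prescribe the label seen by the two connecting edges incident with that copy. Along each branch I would alternate the patterns so that consecutive copies present opposite labels at their junction, producing inter-copy edges labeled $1$; separately, I would flip the pattern assignment along roughly half of the branches in order to convert the appropriate number of connecting edges to label $0$, targeting $\lceil tn/2 \rceil$ of one label and $\lfloor tn/2 \rfloor$ of the other, analogously to the ceilings and floors appearing in Case-I of Theorems~2 and~3.

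The main obstacle will be the case analysis on the parities of $t$ and of $tn$: the parity of $tn$ dictates whether the unit of cordial slack is available on the edge side or must be absorbed exactly, while the parity of $t$ controls how symmetrically the pattern-flips can be distributed among the branches. I would split the argument into the cases $t\equiv 0\pmod 2$ and $t\equiv 1\pmod 2$ as in Theorem~3, express $|e_f(0)|$ and $|e_f(1)|$ in closed form using $\lceil tn/2\rceil$ and $\lfloor tn/2\rfloor$, and in each case conclude $|e_f(0)-e_f(1)|\le 1$. The labeling would then satisfy both cordial conditions, and the conclusion would follow after exhibiting illustrations for small $t$ and $n$ in the style of Illustrations~3--6.
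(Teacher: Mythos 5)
Your proposal follows essentially the same route as the paper: label the apex $0$, give each of the $tn$ copies of $G_n$ one of the two internally balanced patterns from Theorem~1 (each copy contributing $4n$ vertices and $6n$ edges of every label regardless of the pattern chosen), and select the pattern per copy via parity cases on the branch and depth indices so that the $tn$ connecting edges split as $\lceil tn/2\rceil$ and $\lfloor tn/2\rfloor$. The only point to watch is that a single pattern choice fixes both attachment-vertex labels of a copy simultaneously (the two patterns disagree only at $j=2$ and $j=7$), so you get one bit of freedom per copy rather than two, but that already suffices to realize any number of $1$-labeled connecting edges between $0$ and $tn$, and your scheme goes through.
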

\begin{proof}
    Consider a graph obtained by replacing each vertex of one point union for a path $P_{n}^{t}$ by the Goldberg Snark graph $\operatorname G_{n}$.
\\
Note that the Goldberg Snark $G_{n}$ is a graph obtained in four layers. Let the vertex sets of the graph $G_{n}$ be $V= \left\{v_{0}, v_{i, j, k, l, m}\right\}$.
\\
\\where $v_{0}$ is the apex vertex; where $l$ represents edge connected to an Open Star graph via apex vertex, $k$ represents the number of copies of Snark $G_{n}$ in Path Union which replace the vertex on path which are connected to apex vertex through edge and $m$ represents the number of copies of path in the Path Union connected to $k$-copies of the Snark graph, which implies $k= m+1$.
\\
where $1 \leq i \leq n, 1 \leq j \leq 8$\\
$k=1,2, \ldots, q ; q \in N $\\
$l=1,2, \ldots, t ; t \in N$\\
$m=1,2, \ldots, p ; p \in N$

\noindent
The inner most layer can be obtained by the Petersen graph. Again, due to the isomorphic property of all subgraphs $H_{r}$ of the Goldberg Snark graph $G_{n}$, the arrangement of the vertices remains unchanged for each subgraph $H_{r}, r \in\{1,2,3, \ldots, n\}$. Now assign a label 0 to the central vertex $v_{0}$. Because of the isomorphic nature of the subgraphs $H_{r}$ of $G_{n}$, and since $l$ is even in this case, these are $l$ subgraphs of $H_{r}$ for $1 \leq r \leq l$. And so we can assign a label 0 to any of the $\frac{l}{2}$ branches of $G_{n}$ and a label 1 to the remaining $\frac{l}{2}$ branches of $G_{n}$.
\\
Define a vertex labeling function $f: V(G) \rightarrow\{0,1\}$ as below. To make Path Union of $G_{n}$, connect the vertices $v_{0}$ and $v_{i, j, k, l, m}$ through an edge. The labeling of the vertices has been given in the following cases:
\\
\\
\textbf{Case I : $\boldsymbol{l \equiv 0}(\boldsymbol{mod 2}), \boldsymbol{k \equiv 0},\, \boldsymbol{1}(\boldsymbol{\operatorname { m o d } 2}),\, \boldsymbol{m \equiv 0},\, \boldsymbol{1}(\boldsymbol{\operatorname { m o d } 2})$}\\
Here, we consider, $1 \leq i \leq n, 1 \leq j \leq 8$\\
$f\left(v_{i, j, k, l, m}\right)=\left\{\begin{array}{l}0 ; j \equiv 0(\bmod 2), 1 \leq l \leq \frac{t}{2}, 1 \leq k \leq q, 1 \leq m \leq p \\ 1 ; j \equiv 1(\bmod 2), 1 \leq l \leq \frac{t}{2}, 1 \leq k \leq q, 1 \leq m \leq p\end{array}\right.$
\\
\\
$f\left(v_{i, j, k, l, m}\right)=\left\{\begin{array}{l}
0 ; j \equiv 0(\bmod 2), \frac{t}{2}<l \leq t, k=\left\lfloor\frac{4 q+1}{2}\right\rfloor, m=\left\lceil\frac{4 p-3}{2}\right\rceil, p ;q=1,2,3, \ldots \\
1 ; j \equiv 1(\bmod 2), \frac{t}{2}<l \leq t, k=\left\lfloor\frac{4 q+1}{2}\right\rfloor, m=\left\lceil\frac{4 p-3}{2}\right\rceil, p ;q=1,2,3, \ldots
\end{array}\right.$
\\
$f\left(v_{i, j, k, l, m}\right)=\left\{\begin{array}{l}
0 ; j=4,6,7,8, \frac{t}{2}<l \leq t, k=\left\lceil\frac{4 q-3}{2}\right\rceil, m=\left\lfloor\frac{4 p+1}{2}\right\rfloor, p ;q=1,2,3, \ldots \\
1 ; j=1,2,3,5, \frac{t}{2}<l \leq t, k=\left\lceil\frac{4 q-3}{2}\right\rceil, m=\left\lfloor\frac{4 p+1}{2}\right\rfloor, p ;q=1,2,3, \ldots
\end{array}\right.$
\\
\\
By considering the above labeling pattern, we will now check for the conditions of Cordial labeling for $f$.
\\
$$
\begin{aligned}
& \left|v_{f}(0)\right|=\left|\frac{n t\left|V_{G_{n}}\right|}{2}+1\right| \text { or }\left|v_{f}(0)\right|=\left|\left|v_{f}(1)\right|+1\right| \\
& \left|v_{f}(1)\right|=\left|\frac{n t\left|V_{G_{n}}\right|}{2}\right| \\
& \left|\left|v_{f}(0)\right|-\left|v_{f}(1)\right|\right| \Rightarrow\left|\left|\frac{n t\left|V_{G_{n}}\right|}{2}+1\right|-\left|\frac{n t\left|V_{G_{n}}\right|}{2}\right|\right\rvert\,=1 \leq 1 \\
& \left|e_{f}(0)\right|=\left|e_{f}(1)\right|=\left|\frac{n t\left|E_{G_{n}}\right|}{2}+\frac{n t}{2}\right| \\
& \left|\left|e_{f}(0)\right|-\left|e_{f}(1)\right|\right| \Rightarrow\left|\left|\frac{n t\left|E_{G_{n}}\right|}{2}+\frac{n t}{2}\right|-\left|\frac{n t\left|E_{G_{n}}\right|}{2}+\frac{n t}{2}\right|\right\rvert\,=0 \leq 1
\end{aligned}
$$
\\
In the view of above labeling pattern, $f$ satisfies the conditions of Cordial labeling for $G$. Thus, the graph $G$ is Cordial for even $l, k$ and odd $m$. Hence, the graph $G$ is Cordial for even $l, m$ and odd $k$.
\\
\\
\textbf{Case II: $\boldsymbol{l} \boldsymbol{\equiv} \boldsymbol{1}\boldsymbol{(\bmod 2),} \, \boldsymbol{k \equiv }\boldsymbol{0}, \boldsymbol{1}\boldsymbol{(\bmod 2),} \boldsymbol{m \equiv }\boldsymbol{1}, \boldsymbol{0(\bmod 2)}$}\\
Here, we consider, $1 \leq i \leq n, 1 \leq j \leq 8$\\
$f\left(v_{i, j, k, l, m}\right)=\left\{\begin{array}{l}0 ; j \equiv 0(\bmod 2), l=\left\lfloor\frac{4 t+1}{2}\right\rfloor, 1 \leq k \leq q, 1 \leq m \leq p, t=1,2,3, \ldots \\ 1 ; j \equiv 1(\bmod 2), l=\left\lfloor\frac{4 t+1}{2}\right\rfloor, 1 \leq k \leq q, 1 \leq m \leq p, t=1,2,3, \ldots\end{array}\right.$\\
$f\left(v_{i, j, k, l, m}\right)=\left\{\begin{array}{l}0 ; j \equiv 0(\bmod 2), l=\left\lceil\frac{4 t-3}{2}\right\rceil, k=\left\lceil\frac{4 q-3}{2}\right\rceil, m=\left\lfloor\frac{4 p+1}{2}\right\rfloor, t; p; q=1,2,3, \ldots \\ 1 ; j \equiv 1(\bmod 2), l=\left\lceil\frac{4 t-3}{2}\right\rceil, k=\left\lceil\frac{4 q-3}{2}\right\rceil, m=\left\lfloor\frac{4 p+1}{2}\right\rfloor, t; p; q=1,2,3, \ldots\end{array}\right.$\\
$f\left(v_{i, j, k, l, m}\right)=\left\{\begin{array}{l}0 ; j=4,6,7,8, l=\left\lceil\frac{4 t-3}{2}\right\rceil, k=\left\lfloor\frac{4 q+1}{2}\right\rfloor, m=\left\lceil\frac{4 p-3}{2}\right\rceil, t; p; q=1,2,3, \ldots \\ 1 ; j=1,2,3,5, l=\left\lceil\frac{4 t-3}{2}\right\rceil, k=\left\lfloor\frac{4 q+1}{2}\right\rfloor, m=\left\lceil\frac{4 p-3}{2}\right\rceil, t; p; q=1,2,3, \ldots\end{array}\right.$\\
\\
\\
In the view of the above labeling pattern, we now check for the condition of vertex labeling for the cordial labeling for $f$.\\
$\left|v_{f}(0)\right|=\left|\frac{n t\left|V_{G_{n}}\right|}{2}+1\right|$ or $\left|v_{f}(0)\right|=\left|\left|v_{f}(1)\right|+1\right|$\\
$\left|v_{f}(1)\right|=\left|\frac{n t\left|V_{G_{n}}\right|}{2}\right|$\\
$\left|\left|v_{f}(0)\right|-\left|v_{f}(1)\right|\right| \Rightarrow\left|\left|\frac{n t\left|V_{G_{n}}\right|}{2}+1\right|-\left|\frac{n t\left|V_{G_{n}}\right|}{2}\right|\right|=1 \leq 1$
\\
\\In view of the above labeling pattern, we now check for the condition of edge labeling for the cordial labeling for $f$.
\\
\\
\textbf{Sub-Case II (a) : $\boldsymbol{k \equiv} \mathbf{0}(\boldsymbol{\operatorname { m o d } 2}),\, \boldsymbol{m \equiv }\mathbf{1}(\boldsymbol{\operatorname { m o d } 2})$}\\
$\left|e_{f}(0)\right|=\left|\frac{n t\left|E_{G_{n}}\right|}{2}+\frac{n t}{2}\right|$\\
$\left|e_{f}(1)\right|=\left|\frac{n t\left|E_{G_{n}}\right|}{2}+\frac{n t}{2}\right|$\\
$\left|\left|e_{f}(0)\right|-\left|e_{f}(1)\right|\right| \Rightarrow\left|\left|\frac{n t\left|E_{G_{n}}\right|}{2}+\frac{n t}{2}\right|-\left|\frac{n t\left|E_{G_{n}}\right|}{2}+\frac{n t}{2}\right|\right|=0 \leq 1$
\\
\\In view of the above labeling pattern, $f$ satisfies the conditions of the Cordial labeling for $G$. Therefore, the Graph $G$ is Cordial for even $k$ and odd $l$ and $m$.
\\
\\
\textbf{Sub-Case II} $(\boldsymbol{b}): \boldsymbol{k \equiv 1}(\boldsymbol{\operatorname { m o d } 2}),\, \boldsymbol{m \equiv} \mathbf{0}(\boldsymbol{\operatorname { m o d } 2})$
\\
In the view of above labeling pattern, we now check for the condition of Cordial labeling for $f$.\\
$\left|e_{f}(0)\right|=\left|\frac{n t\left|E_{G_{n}}\right|}{2}+\left\lceil\frac{n t}{2}\right\rceil\right|$ \ or $\left|e_{f}(0)\right|=\left|\left|e_{f}(1)\right|+1\right|$\\
$\left|e_{f}(1)\right|=\left|\frac{n t\left|E_{G_{n}}\right|}{2}+\left\lfloor\frac{n t}{2}\right\rfloor\right|$\\
$\left|\left|e_{f}(0)\right|-\left|e_{f}(1)\right|\right| \Rightarrow\left|\left|\frac{n t\left|E_{G_{n}}\right|}{2}+\left\lceil\frac{n t}{2}\right\rceil\right|-\left|\frac{n t\left|E_{G_{n}}\right|}{2}+\left\lfloor\frac{n t}{2}\right\rfloor\right|\right|=1 \leq 1$
\\
\\In the view of above labeling pattern, $f$ satisfies the conditions of the Cordial labeling for $G$. Hence, the Graph $G$ is Cordial for even $m$ and odd $l$ and $k$.
\\
\\Therefore, one point union for a path of a Goldberg Snark $P_{n}^{t}\left(t_{n}, G_{n}\right)$ is a Cordial graph.
\clearpage

\subsubsection*{Illustration 7: A Cordial Labeling of $P_{4}^{4}(4_4,G_7)$}
\vspace{-0.01cm}
\begin{figure}[htbp!]
\begin{center}
  \includegraphics[scale=0.2]{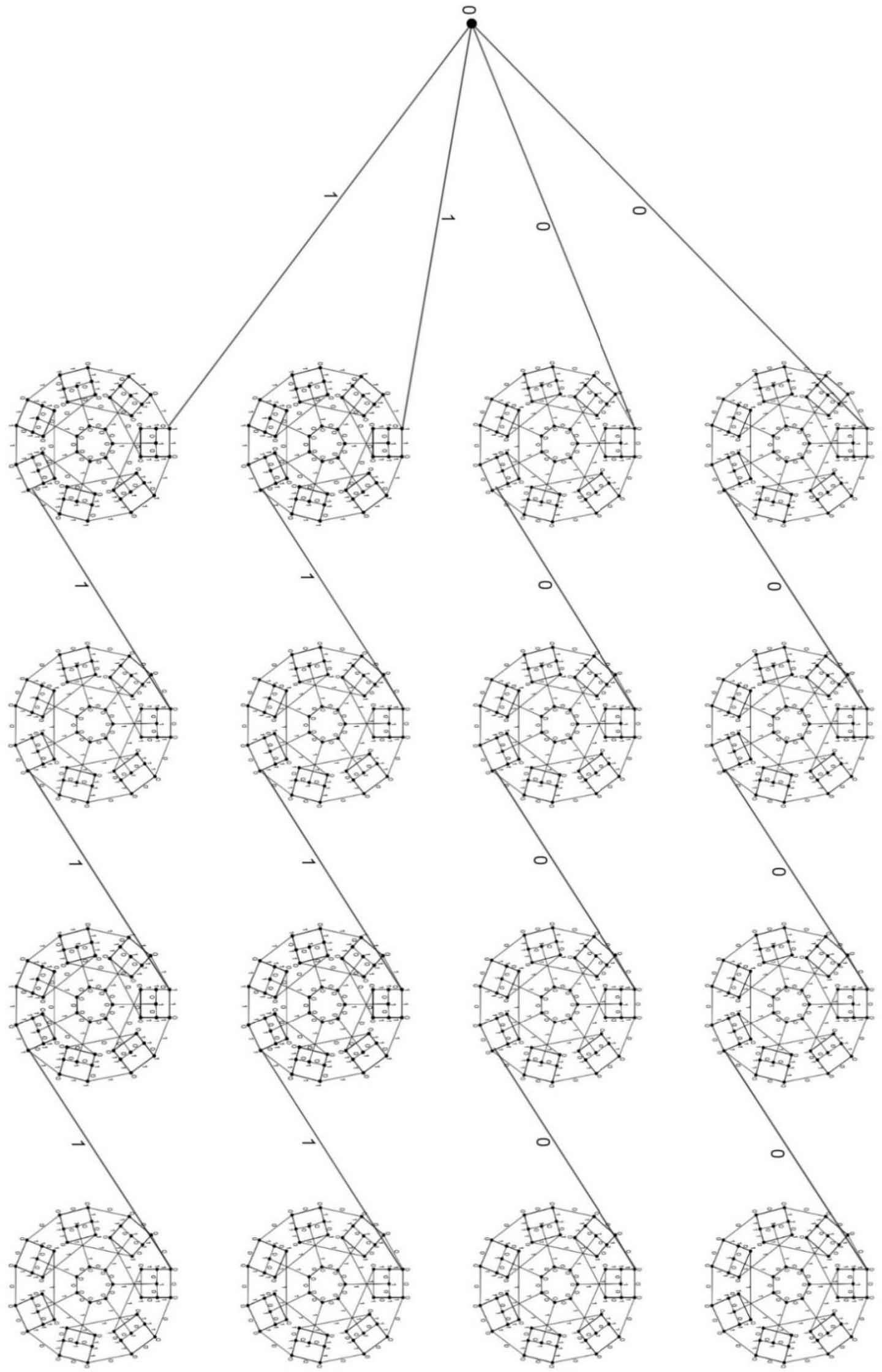}
\caption{A Cordial Labeling of $P_{4}^{4}(4_4,G_7)$}\label{12}
\end{center}
\end{figure}
\vspace{1cm}
 \noindent
\clearpage
\subsubsection*{Illustration 8: A Cordial Labeling of $P_{3}^{4}\left(\mathbf{4}_{3}, G_{5}\right)$.}
\begin{figure}[htbp!]
\begin{center}
  \includegraphics[scale=0.19]{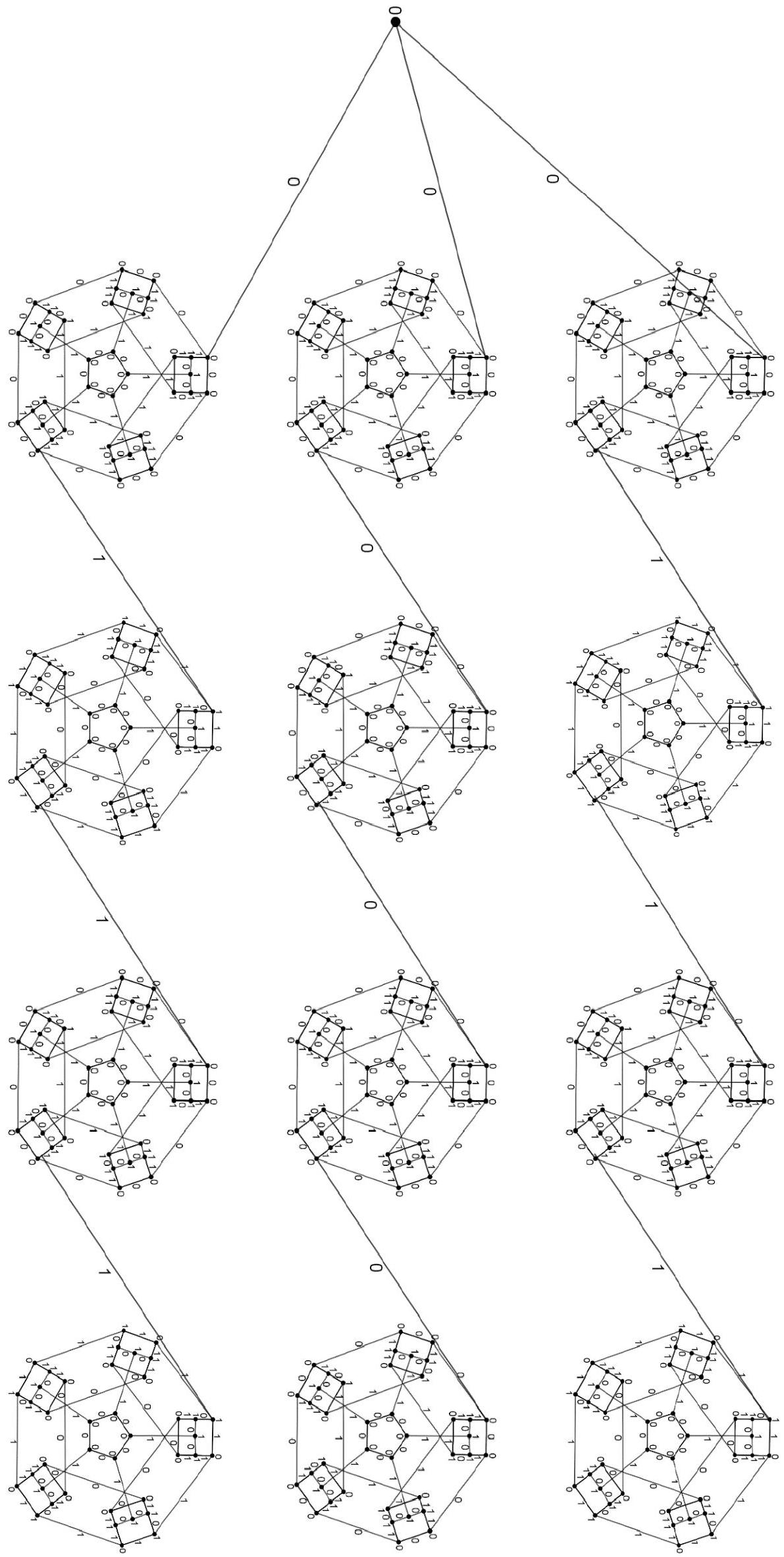}
\caption{A Cordial Labeling of $P_{3}^{4}\left(\mathbf{4}_{3}, G_{5}\right)$}\label{13}
\end{center}
\end{figure}
\vspace{1cm}
\noindent

\clearpage
\subsubsection*{Illustration 9:  A Cordial Labeling of $P_{3}^{3}\left(3_{3}, G_{5}\right)$.}
\begin{figure}[htbp!]
\begin{center}
  \includegraphics[scale=0.22]{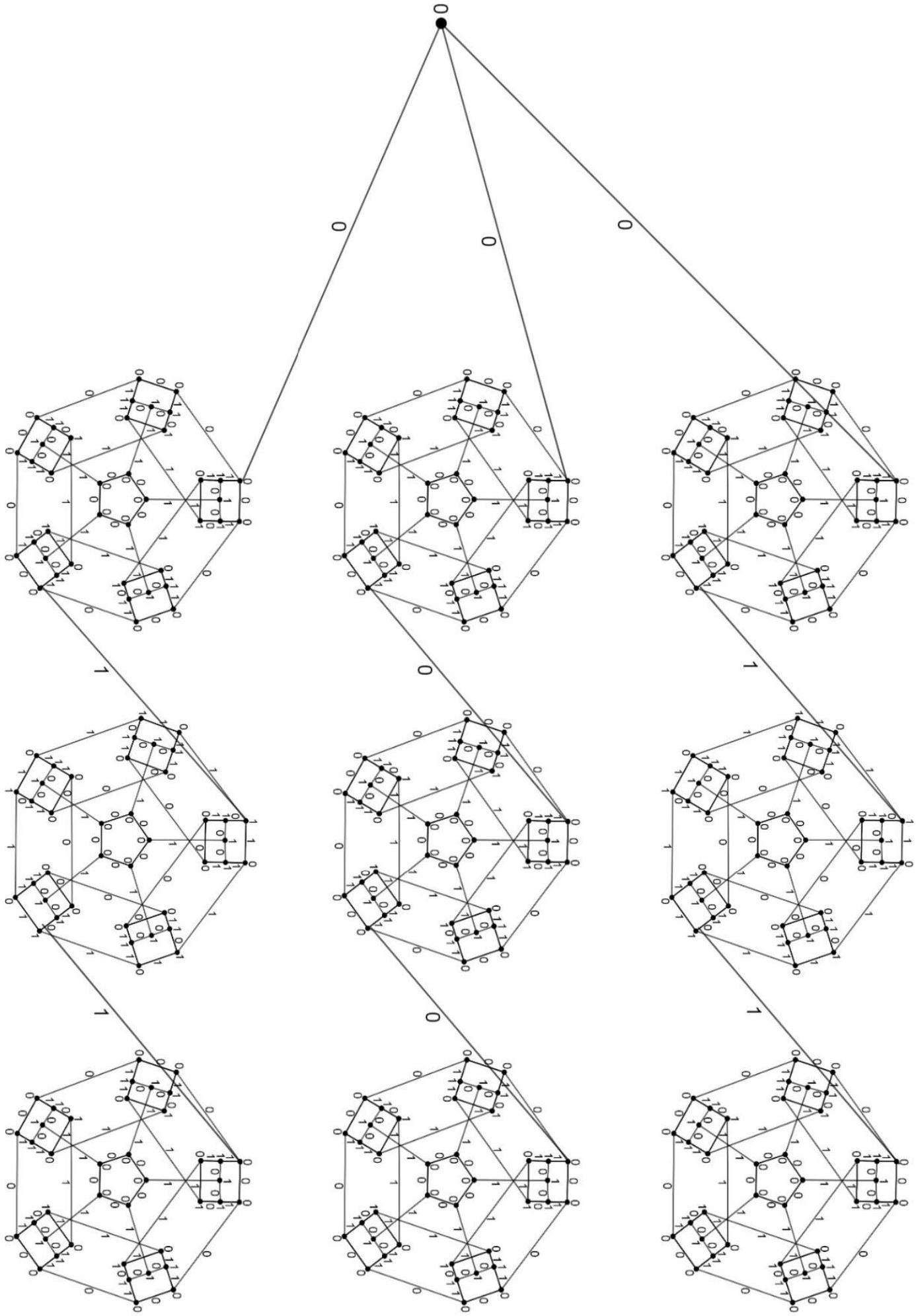}
\caption{ A Cordial Labeling of $P_{3}^{3}\left(3_{3}, G_{5}\right)$}\label{14}
\end{center}
\end{figure}
\end{proof}

\section{Conclusion}
In this paper, we have proved that the Goldberg Snark graph and its Star graph are Cordial. Further, we have shown that the Path-Union of Goldberg Snark graph and One-point union of path of Goldberg Snark graph satisfy the properties of Cordial labeling.

\nocite{*}
\bibliographystyle{abbrvnat}
\bibliography{sample-dmtcs}
\label{sec:biblio}

\end{document}